%
%

%
%
%
\documentclass[12pt,reqno]{amsart}

\usepackage{epsfig}
\usepackage{amscd}
\usepackage[mathscr]{eucal}
\usepackage{amssymb}
\usepackage{amsxtra}
\usepackage{amsmath}
\usepackage[all]{xy}
\usepackage{mathtools}
\usepackage[pdfencoding=auto]{hyperref}
\usepackage{bookmark}
\usepackage{hyperref}

\DeclarePairedDelimiter\abs{\lvert}{\rvert}

%
\theoremstyle{plain}

\newtheorem{prop}{Proposition}[section]

\usepackage{amsthm}

\theoremstyle{definition}

\theoremstyle{remark}
\newtheorem{rem}[subsection]{Remark}

\newtheorem{ex}[subsection]{Example}

\setcounter{mainthm}{0}


%


%


\oddsidemargin 0pt 
\evensidemargin 0pt 
\marginparwidth 40pt 
\marginparsep 10pt 

\topmargin 0pt 
\headsep 15pt 

\textheight 8.5in 
\textwidth 6.3in 
\topmargin 0pt

\headheight12pt 

%

%

\begin{document}

\bigskip

\title[Dynamic approach for the zeros of zeta - Collisions and cancellation]{A dynamic approach for the zeros of the Riemann zeta function - Collision and repulsion}

\date{\today}

\author{Yochay Jerby}

\address{Yochay Jerby, Faculty of Sciences, Holon Institute of Technology, Holon, 5810201, Israel}

\email{yochayj@hit.ac.il}
%
%
\begin{abstract} 
For $N \in \mathbb{N}$ consider the $N$-th section of the approximate functional equation   
$$
\zeta_N(s)=  \sum_{n =1 }^N B_n(s),$$ where 
$$
B_n(s)= \frac{1}{2} \left [ n^{-s} + \chi(s) \cdot  n^{s-1} \right ].$$  
Our aim in this work is to introduce a new approach for the Riemann hypothesis by studying the way pairs of consecutive zeros of $\zeta_N(s)$ change with respect to $N$. 

For the initial stage, it is known that the non-trivial zeros of $\zeta_1(s)$ all lie on the critical line $Re(s)=\frac{1}{2}$. In the region $2N \leq Im(s) \leq 2 \pi (N+1)$ the function $\zeta_N(s)$ serves as an approximation of $\zeta(s)$ itself, and it was conjectured by Spira that in this region $\zeta_N(s)$ also admits zeros only on the critical line.  

We show that the appearance of zeros of a section off the critical line can be realized as the result of two consecutive zeros meeting and pushing each other off the critical line as $N$ changes, a process to which we refer to as \emph{a collision of zeros}. Based on a study of the properties of $\zeta_N(s)$, we suggest a way of re-arranging the order of summation of the elements $B_n(s)$ in $\zeta_{\left [ \frac{Im(s)}{2} \right ]}(s)$ that is expected to avoid collisions altogether, we refer to such a re-arrangement as a \emph{repelling re-arrangement}. In particular, establishing that the repelling re-arrangement indeed avoids collisions for \emph{any} pair of zeros would imply RH.

\end{abstract}

\maketitle
%
%
\section{Introduction and Summary of Main Results}

Let $\zeta(s)$ be the Riemann zeta function for $s=\sigma+i t$. In the range $ 1 < \sigma$ the function is defined as 
\begin{equation}
\label{eq:zeta-classic}
\zeta(s) := \sum_{n=1}^{\infty} \frac{1}{n^{s}},   
\end{equation} 
and is extended analytically to be defined on the whole complex plane. The Riemann Hypothesis postulates:

\bigskip

\hspace{-0.6cm} \bf The Riemann Hypothesis: \rm The non-trivial zeros $\rho$ of the Riemann zeta function, $\zeta(s)$, all lie on the critical line $\sigma=\frac{1}{2}$.

\bigskip

In fact, from the direct definition of $\zeta(s)$ itself, as given for instance in Eq. \ref{eq:zeta-classic}, it is rather hard to decipher any useful insight regarding the zeros $\rho$ and their location. In practice (beginning from Riemann himself), values of $\zeta(s)$ are not computed via the direct definition, but typically via some form of an approximate functional equation, see \cite{HL,HL2,HL3,Si}. In the 1960's Robert Spira conducted a theoretical and numerical study of the zeros of partial sums (sections) of the classical approximate functional equation, see \cite{SP1,SP2}. That is, Spira considered the zeros of the $N$-th sections of the approximate functional equation  
\begin{equation}
\label{eq:HL-AFE} 
\zeta_N(s)=  \sum_{n =1 }^N B_n(s),
\end{equation}  
where  
\begin{equation}
\label{eq:Bn} 
B_n(s)= \frac{1}{2} \left [ n^{-s} + \chi(s) \cdot n^{s-1} \right ],
\end{equation} 
and
\begin{equation} \chi(s):=2^s \pi^{s-1}sin \left ( \frac{\pi s}{2}\right ) \Gamma(1-s) \end{equation} is the 
function appearing in the functional equation 
\begin{equation}
\zeta(s) = \chi(s) \zeta (1-s).
\end{equation} For $N=1$ and $N=2$ (for $t$ sufficiently large) Spira showed that $\zeta_N(s)$ satisfy the Riemann hypothesis and admit zeros only on the critical line $\sigma = \frac{1}{2}$. Due to the 
functional equation, the function $\zeta_N(s)$ serves as an approximation of $\zeta(s)$ in the region $\sqrt{2 \pi N} \leq t \leq 2 \pi N$.
In fact, Spira conjectures the following RH for sections:

\bigskip

\hspace{-0.6cm} \bf Conjecture \rm (Spira \cite{SP1}): All the zeros of the section $\zeta_N(s)$ in the region $\sqrt{2 \pi N} \leq t \leq 2 \pi N$ lie on the critical line $\sigma =\frac{1}{2}$. 

\bigskip 

In his studies it is apparent that Spira mainly considered the properties of zeros of $\zeta_N(s)$ separately, for given $N$ at a time. Our aim in this work is to present a new approach to Spira's conjecture. The main feature is that rather than studying the zeros of a given section $\zeta_N(s)$ independently, we are interested in studying the way the zeros of the sections $\zeta_N(s)$ change with respect to $N$. Concretely, let us summarize the main points of our approach:   

\begin{enumerate}

\item Rouche's theorem implies the existence of a one-to-one correspondence between the zeros of $\zeta_N(s)$ and $\zeta_{N+1}(s)$ in the critical strip. In particular, no "new" zeros are created in the critical strip during the transition from one section to the other. We are thus interested in the way in which the zeros of the sections $\zeta_N(s)$ change as $N$ changes from the first section $\zeta_1(s)$, whose zeros are known to lie on the critical line, to the $\left [ \frac{t}{2} \right ]$-th section, whose zeros are expected to lie on the critical line by Spira's conjecture. 

\item Even-though the zeros are known to begin at the initial stage of $N=1$ on the critical line, and are expected to eventually also lie on the critical line for $N=\left [ \frac{t}{2} \right ]$, some of the intermediary sections $\zeta_N(s)$ might violate RH. That is their zeros do not nessecerally need to lie on the critical line for any $1 \leq N \leq \left [ \frac{t}{2} \right ]$. We observe, however, that zeros of $\zeta_N(s)$ can appear off the critical line only if a process to which we refer as \emph{collision} occurred between a pair of consecutive zeros in a previous stage. This leads us to study the interactions between pairs of zeros as $N$ changes.        

\item The question is thus, could collisions between a given pair of zeros be avoided? By definition, the section $\zeta_{\left [ \frac{t}{2} \right ]} (s)$ of interest is given as the sum of the elements 
\begin{equation} 
B_n(s) = \frac{1}{2} \left [ n^{-s} + \chi(s) \cdot n^{s-1} \right ]
\end{equation}  
for the first $n=1,..., \left [\frac{t}{2} \right ]$. The initial approach described above is based on summing the elements $B_n(s)$ in consecutive order (giving rise to the sections $\zeta_N(s)$) until the $\left [\frac{t}{2} \right ]$-th element is added. However, the problem with this naive approach is that the unwanted collisions could occur between various pairs of zeros, pushing those zeros away from the critical line at certain stages. 

The question can thus be rephrased as follows: can the summation order of the elements $B_n(s)$ be re-arranged in a different manner so that collisions would be avoided altogether? That is, is it possible that the collisions are not an essential phenomena but rather a by-product of the specific consecutive order of summation considered? 

We give various theoretical justifications suggesting that the answer to this question is affirmative. In fact, based on a few fundamental observations regarding the way the sections $\zeta_N(s)$ change with respect to $N$, we suggest a well-defined re-arrangement of the order of summation. We conjecture that this re-arrangement avoids collisions for any pair of consecutive zeros altogether, and hence refer to it as a \emph{repelling re-arrangement}. 

\end{enumerate}

In short our our observations could be summarized as follows:

\begin{itemize} 
\item The non-trivial zeros of $\zeta(s)$ are "born" as zeros of $\zeta_1(s)$ on the critical line, which are regulated and well-understood, and dynamically develop by gradually adding the first $\left [ \frac{t}{2} \right ]$ elements of $B_n(s)$ to $\zeta_1(s)$ to obtain $\zeta_{\left [ \frac{t}{2} \right]}(s)$. 

\item We \bf conjecture \rm that, when the addition is done via the repelling re-arrangement, no collision occurs between consecutive zeros and, hence, the zeros always remain on the critical line. This includes the final stage, where their position \emph{on the critical line} is identical to that of the zeros of $\zeta(s)$, up to a negligible error. In particular, the non-trivial zeros of $\zeta(s)$ must lie on the critical line, that is, they satisfy RH.
\end{itemize}
   
Let us note that throughout the work we would also be concerned with a variant of the classical sections $\zeta_N(s)$ to which we refer as the accelerated sections $\widetilde{\zeta}_N(s)$. The accelerated sections are given as partial sums of the Euler transformation of series of the defining sum of $\zeta(s)$ given by Eq. \ref{eq:zeta-classic}. Everything mentioned for the classical sections applies to the accelerated sections as well. However, the advantage of the accelerated sections $\widetilde{\zeta}_N(s)$ is twofold. First, the approximation of zeta afforded by them is far superior to that given by the classical sections $\zeta_N(s)$. Moreover, their change with respect to $N$ is "smoothened" relative to that of the classical sections.  

\bigskip 

The rest of this work is devoted to explaining and expanding in detail on Points (1)-(3) and is organized as follows: In Section \ref{s:1} we introduce the Euler transformation of series procedure for $\zeta(s)$ and the corresponding accelerated sections $\widetilde{\zeta}_N(s)$. An initial discussion on the analytical distinctions between the classical and accelerated sections is presented. In Section \ref{s:2} we discuss the zeros of $\zeta_1(s)$, review their recent representation in terms of the Lambert function due to Franca-LeClair given in \cite{FL,FL2} as well as present an alternative new description. In Section \ref{s:3} we study the zeros of the sections $\zeta_N(s)$ and give examples for their collisions as $N$ changes from zero to $\left [ \frac{t}{2} \right ]$. Furthermore, new interpretations of Gram's law are presented, see Remark \ref{rem:gram}. In Section \ref{s:4}, based on observations from Section \ref{s:1} and Section \ref{s:3}, we introduce the repelling re-arrangement and illustrate how it leads to avoiding of collisions. We consider the Davenport-Heilbronn function $\mathcal{D}(s)$, and show how the various phenomena discussed for zeta are violated for $\mathcal{D}(s)$, see Remark \ref{rem:DH}. Relations to the Montgomery pair correlation conjecture is also considered, see Remark \ref{rem:MPC}. Finally, in Section \ref{s:5} we present a summary and concluding remarks.

 \section{The Euler transformation of series for $\zeta(s)$ and its comparison to the classical sum} 
 \label{s:1}

 For $ 1 < \sigma $ the Riemann zeta is defined by the series 
\begin{equation} 
 \zeta(s) = \sum_{n=1}^{\infty} \frac{1}{n^s}.
 \end{equation} Although that for $\sigma \leq 1$ the series is not converging it is nevertheless interesting to consider how it behaves in this region. Consider the partial sums 
 \begin{equation} 
 S_N(s):= \sum_{n=1}^N \frac{1}{n^s}
 \end{equation}
 for $N \in \mathbb{N}$. The following Fig. \ref{fig:f1} shows the values of $ln \abs{S_N(\frac{1}{2}+17500 i)}$ for $N=1,...,5000$ (blue) together with the value $ln \abs{\zeta(\frac{1}{2}+17500 i)}$ (orange):
  
\begin{figure}[ht!]
	\centering
		\includegraphics[scale=0.35]{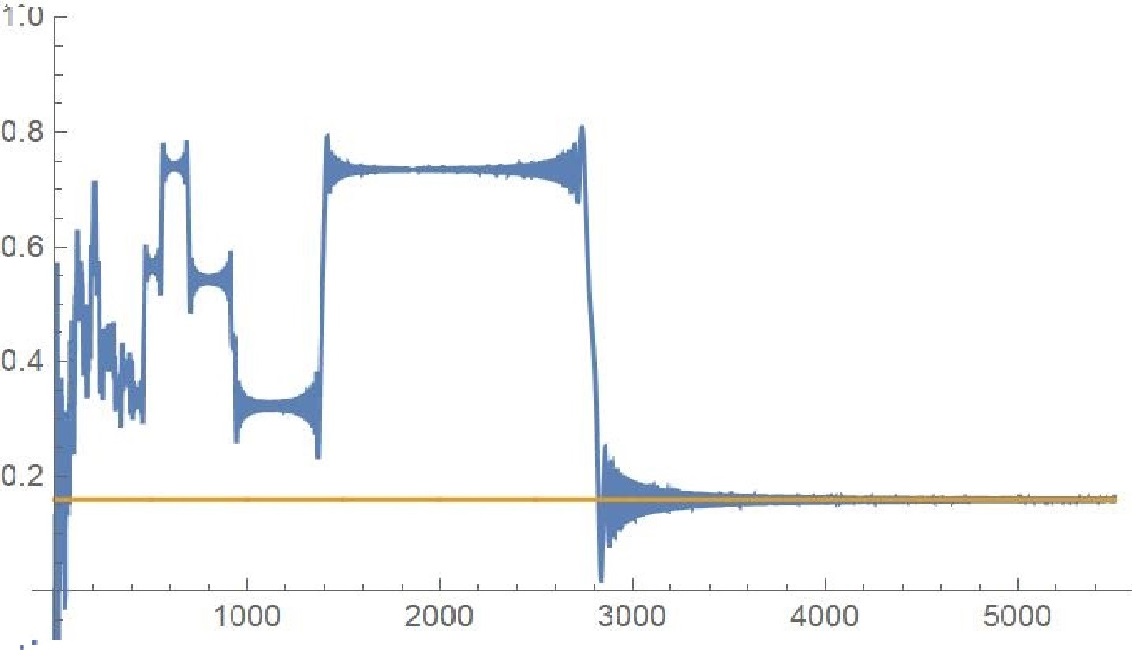} 
	\caption{The values of $ln \abs{S_N(\frac{1}{2}+17500 i)}$ for $N=1,...,5000$ (blue) together with the value $ln \abs{\zeta(\frac{1}{2}+17500 i)}$ (orange).}
\label{fig:f1}
	\end{figure}

Figure 1 could be considered as illustrating the typical behaviour of the partial sums $S_N(s)$. In particular, one can see from Fig. \ref{fig:f1} two features:

\begin{enumerate}

\item Although the series is non-convergent, for $N>>0$ big enough\footnote{It should be noted that if $N$ is taken too big $\zeta_N(s)$ what start deviating from the value of $\zeta(s)$.} the partial sums $S_N(s)$ do eventually serve as approximations of $\zeta(s)$. 

\item The partial sums are $S_N(s)$ seen to fluctuate around other values before stabilizing around the final value of $\zeta(s)$ for $N$ big enough. 
\end{enumerate} 

In fact, the phenomena presented in Fig. 1 is explained by the classical approximate functional equation of Hardy and Littlewood. The classical approximate functional  equation for the Riemann zeta function was proven by Hardy and Littlewood in the series of works \cite{HL,HL2,HL3}. For $0  \leq \sigma \leq 1$, the theorem states that the following holds 
\begin{equation}
\label{eq:HL-AFE} 
\zeta(s)= \sum_{n \leq x } n^{-s} + \chi(s)  \left ( \sum_{n \leq y } n^{s-1} \right )  + O \left ( x^{-\sigma} + x^{\frac{1}{2} - \sigma} y^{-\frac{1}{2}}  \right ),
\end{equation} when $x,y \geq 1$ are such that $\vert t \vert = 2 \pi x y$. In particular, the AFE explains what are the sub-values around which $S_N(s)$ fluctuates before stabilizing around $\zeta(s)$. These are exactly the values $\zeta(s)-\chi(s) \left ( \sum_{n=1}^M n^{s-1} \right )$ and the region of values of $N$ for which these fluctuations occurs is roughly the region 
\begin{equation}
\left [ \frac{t}{2 (M+1) \pi} \right ] \leq N \leq \left [ \frac{t}{2 M \pi} \right ].
\end{equation} 
This rephrasing of the AFE in view of the behaviour expressed in Fig. \ref{fig:f1} would be of importance in the discussion of the following Section \ref{s:5}. 

Let us now turn to consider the Euler transformation of series for $\zeta(s) = \sum_{n=1}^{\infty} \frac{1}{n^s}$. Recall that to a given alternating series (convergent or divergent) 
\begin{equation}
S=\sum_{n=1}^{\infty} (-1)^{n-1} a_n 
\end{equation}
  one can apply the highly classical procedure of Euler's transformation of series, see \cite{Eu,Har,K}. Concretely, one can re-write the series as   
\begin{equation} 
S= \sum_{n=0}^{\infty} \frac{\Delta^n a_1}{2^{n+1}}, 
\end{equation}    
where 
\begin{equation} 
\Delta^n a_1:=\sum_{k=0}^n (-1)^k \binom{n}{k} a_{k+1}.   
\end{equation}  
As a direct application of the transformation to the defining series of zeta, $\zeta(s)=\sum_{n=1}^{\infty} \frac{1}{n^s}$, itself, that is to the sequence 
\begin{equation}
a_n(s):= \frac{(-1)^{n-1}}{n^s},
\end{equation}
 we obtain the following global formula\footnote{Formula Eq. \ref{eq:Euler-acc} is a variant of the Hasse-Sondow global formula, obtained by applying Euler acceleration to the defining series of the Dirichlet eta function $\eta(s)$ instead of Eq. \ref{eq:zeta-classic}, see \cite{H,S}.}: 

\begin{prop}
 The following formula  
\begin{equation} 
\label{eq:Euler-acc}
\zeta(s)=  \sum_{n=0}^{\infty} \widetilde{A}(s,n), 
\end{equation} 
where 
\begin{equation}
\widetilde{A}(s,n):=  \frac{1}{2^{n+1}} \sum_{k=0}^n \binom{n}{k} \frac{1}{(k+1)^s}, 
\end{equation} 
holds for any $s \in \mathbb{C}$. 
\end{prop}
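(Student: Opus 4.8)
The plan is to prove the identity first in the half-plane of absolute convergence $\sigma>1$, where every manipulation is unambiguous, and then to pass to general $s$ by analytic continuation. Formally the summand is forced on us by the Euler transformation quoted above: substituting $a_n(s)=(-1)^{n-1}n^{-s}$ we have $a_{k+1}(s)=(-1)^k(k+1)^{-s}$, so the alternating sign in $\Delta^n a_1=\sum_{k=0}^n(-1)^k\binom{n}{k}a_{k+1}$ cancels the sign in $a_{k+1}$ and gives $\Delta^n a_1(s)=\sum_{k=0}^n\binom{n}{k}(k+1)^{-s}$. Hence $\widetilde{A}(s,n)=\Delta^n a_1(s)/2^{n+1}$ is exactly the asserted summand, and $\sum_n(-1)^{n-1}a_n(s)=\sum_n n^{-s}=\zeta(s)$. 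This is purely symbolic, however, and the content of the proposition lies entirely in justifying the interchange of limits.

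For the rigorous core I would not treat the Euler transformation as a black box but instead realize the interchange through the Mellin representation of $(k+1)^{-s}$. Writing $(k+1)^{-s}=\Gamma(s)^{-1}\int_0^\infty u^{s-1}e^{-(k+1)u}\,du$ for $\sigma>0$, the binomial theorem collapses the inner sum,
$$\sum_{k=0}^n\binom{n}{k}(k+1)^{-s}=\frac{1}{\Gamma(s)}\int_0^\infty u^{s-1}e^{-u}\bigl(1+e^{-u}\bigr)^n\,du,$$
so that $\widetilde{A}(s,n)=\tfrac{1}{2\Gamma(s)}\int_0^\infty u^{s-1}e^{-u}\bigl(\tfrac{1+e^{-u}}{2}\bigr)^n\,du$. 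Since $0<\tfrac{1+e^{-u}}{2}<1$ for $u>0$, the geometric series in $n$ may be summed under the integral — the interchange being justified by Tonelli, as the absolute-value integrand sums to $\Gamma(\sigma)\zeta(\sigma)/|\Gamma(s)|<\infty$ when $\sigma>1$ — yielding
$$\sum_{n=0}^\infty\widetilde{A}(s,n)=\frac{1}{\Gamma(s)}\int_0^\infty\frac{u^{s-1}}{e^u-1}\,du=\zeta(s),$$
the last step being the classical integral formula for $\zeta$. This establishes the identity on $\sigma>1$.

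The remaining task is to remove the restriction $\sigma>1$, and this is where the main obstacle lies. The clean route is the identity theorem: show that the partial sums $\sum_{n=0}^N\widetilde{A}(s,n)$ converge locally uniformly on a domain and then identify the limit with the meromorphic function $\zeta(s)$. The difficulty is that a Laplace-type analysis of the integral (the factor $\bigl(\tfrac{1+e^{-u}}{2}\bigr)^n$ concentrates near $u=0$, where $\tfrac{1+e^{-u}}{2}\approx 1-\tfrac{u}{2}$) gives $\widetilde{A}(s,n)\sim 2^{s-1}n^{-s}$ as $n\to\infty$; consequently the tail of $\sum_n\widetilde{A}(s,n)$ mirrors that of $\sum_n n^{-s}$, and the series converges precisely for $\sigma>1$. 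Thus the statement ``for any $s\in\mathbb{C}$'' cannot be read as literal convergence of the series off $\sigma>1$; it must be understood through the meromorphic continuation supplied by the right-hand integral.

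I would therefore organize the argument so that the $\sigma>1$ identity is the rigorous heart, and present validity for all $s$ as the analytic continuation it necessarily is. It is worth recording here the structural reason the Hasse–Sondow formulas referenced in the footnote converge globally: retaining a factor $(-1)^k$ in the inner sum replaces $\bigl(1+e^{-u}\bigr)^n$ by $\bigl(1-e^{-u}\bigr)^n$, which vanishes to order $n$ at $u=0$ and forces $\widetilde{A}(s,n)$ to decay super-polynomially for every $s$. The present summand, obtained by applying the transformation to the already-alternating sequence $a_n(s)$, lacks this cancellation, which is exactly why the continuation step — rather than the formal derivation — carries the weight of the proof.
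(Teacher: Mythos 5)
Your proposal is correct where it is rigorous, and it does substantially more than the paper, which offers no proof at all: the proposition is presented there as ``a direct application'' of Euler's transformation, i.e.\ exactly the formal computation in your first paragraph (the sign of $a_{k+1}(s)=(-1)^k(k+1)^{-s}$ cancels the sign in $\Delta^n a_1$, leaving $\sum_{k}\binom{n}{k}(k+1)^{-s}$, and the transformed series is declared equal to the original one). Your Mellin-transform argument --- collapsing the inner sum to $\Gamma(s)^{-1}\int_0^\infty u^{s-1}e^{-u}(1+e^{-u})^n\,du$, summing the geometric series under the integral by Tonelli, and landing on $\Gamma(s)^{-1}\int_0^\infty u^{s-1}(e^u-1)^{-1}\,du=\zeta(s)$ --- is a genuine proof for $\sigma>1$, which is more than the paper supplies.

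More importantly, your objection to the phrase ``for any $s\in\mathbb{C}$'' is correct and should not be softened: the series does not converge off $\sigma>1$. The cleanest witness is $s=0$, where $\widetilde{A}(0,n)=2^{-(n+1)}\sum_{k=0}^n\binom{n}{k}=\tfrac12$ for every $n$, so the partial sums diverge to $+\infty$ while $\zeta(0)=-\tfrac12$. Your Laplace-point asymptotic $\widetilde{A}(s,n)\sim 2^{s-1}n^{-s}$ explains this in general: unlike the Hasse--Sondow summand, whose inner factor $(-1)^k$ produces $(1-e^{-u})^n$ and hence super-polynomial decay for all $s$, the present summand inherits the full divergence of $\sum n^{-s}$ at $\sigma=1$. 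So the proposition as literally stated is false, and the only tenable reading is the one you propose: an identity on $\sigma>1$ whose right-hand side continues $\zeta(s)$ through the integral representation, with the ``global'' content of the paper residing instead in the separate claim that the \emph{particular} partial sum $\widetilde{S}_N(s)$ with $N\approx t/2$ approximates $\zeta(s)$ to exponential accuracy. Your proof is sound; it is the statement that needs to be weakened.
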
  

Let us compare the behaviour of the transformed series to that of the classical one. Denote the partial sums of the transformed series by 
\begin{equation} 
\widetilde{S}_N(s) = \sum_{n=0}^N \widetilde{A}(s,n). 
\end{equation} 
 for $N \in \mathbb{N}$. Consider the following Fig. \ref{fig:f2}:
 
  \begin{figure}[ht!]
	\centering
		\includegraphics[scale=0.45]{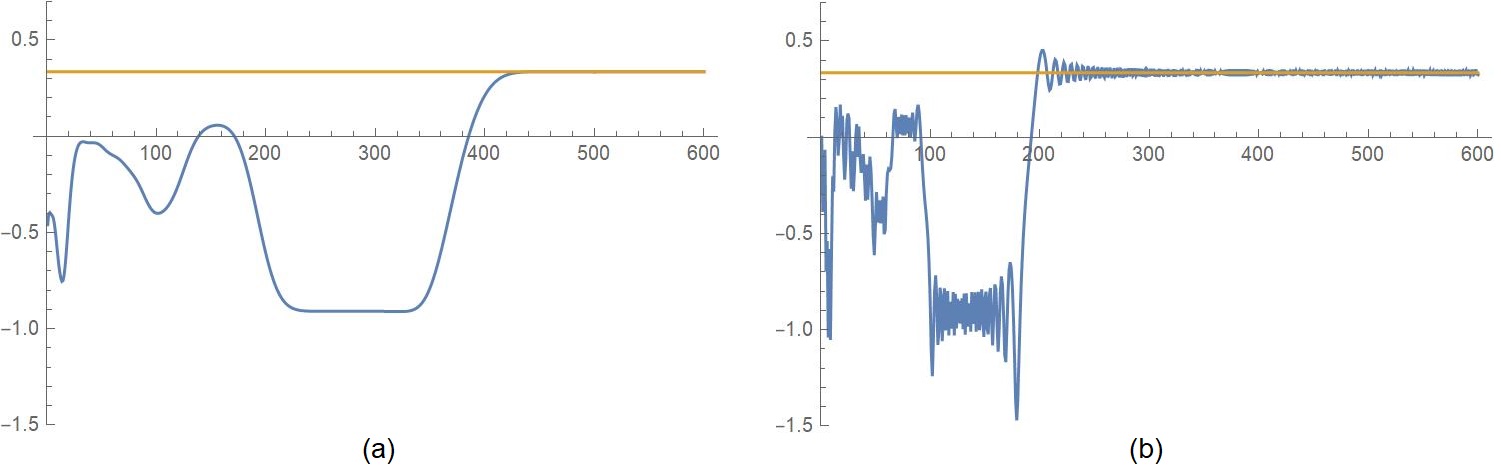} 
	\caption{(a) Values of $ln \abs{\widetilde{S}_N(\frac{1}{2}+1200 i)}$ for $N=1,...,600$ (blue) together with the value $ln \abs{\zeta(\frac{1}{2}+1200 i)}$ (orange) (b) Values of $ln \abs{S_N(\frac{1}{2}+1200 i)}$ for $N=1,...,600$ (blue) together with the value $ln \abs{\zeta(\frac{1}{2}+1200 i)}$ (orange).}
\label{fig:f2}
	\end{figure}

In Fig. \ref{fig:f2} one sees the following two features showing the advantage of the accelerated series over the classical one: 

\begin{enumerate}
\item Contrary to the classical case, the transformed partial sums $\widetilde{S}_N(s)$ seem to be a superb approximation of 
$\zeta(s)$ for $N>>0$ big enough. In fact, in the range $2N \leq t \leq 2 (N+1)$ one has the following approximation with exponentially decaying error term
\begin{equation} 
\zeta(s)=\widetilde{S}_N(s)+O(e^{-\omega \abs{t}})
\end{equation}  for $\omega >0$ is a certain positive constant, which is a variant of our recent result in the setting of the Hasse-Sondow formula, see \cite{J}. This approximation, whose error term is of exponentially decaying error, is far superior to the approximation afforded by the classical approximate functional equation, whose error term is only algebraic. 
\item Although the application of the transformation seems to \emph{smoothen} the behaviour of $\widetilde{S}_N(s)$ with respect to $N$, it 
still bares much overall similarity to the behaviour of the original classical sums $S_N(s)$. In fact, this smoothing feature could 
be explained by noting that by changing the order of summation one can also write  
\begin{equation}
\label{eq:global}
\overline{S}_N(s) = \sum_{k=0}^{N} \widetilde{a}(k,N) (k+1)^{-s}. 
\end{equation} 
where the constants are given by 
\begin{equation} 
\widetilde{a}(k,N):= \sum_{n=k}^N \frac{1}{2^{n+1}} \binom{n}{k}.
\end{equation} 
In comparison, the classical sections could be written via a similar formula with $a(k,N)=1$. In other words, the transformed sections 
$\widetilde{S}_N(s)$ could be considered as adding the weights $\widetilde{a}(k,N)$ instead of the trivial weights for the classical sections. 
\end{enumerate}

In particular, let us define the accelerated $N$-th sections of the global representation 
\begin{equation}
\label{eq:ACC-AFE} 
\widetilde{\zeta}_N(s)= \sum_{n =0 }^N \widetilde{B}_n(s),
\end{equation} 
where 
\begin{equation}
\label{eq:ACC-AFE} 
\widetilde{B}_n(s)= \frac{1}{2} \left [  \widetilde{A}(s,n) + \chi(s) \cdot  \widetilde{A}(1-s,n) \right ].
\end{equation}  
 In what follows we would typically compare the properties of the classical sections $\zeta_N(s)$ to those of the accelerated ones $\widetilde{\zeta}_N(s)$.  
 \section{On the zeros of $\zeta_1(s)$ and their representations} 
 \label{s:2} 
   In this section we consider the zeros of the first section $\zeta_1(s)= 2 \widetilde{\zeta}_0(s)= 1+ \chi(s)$. The first part of this section contains review of known results of Spira and Franca-LeClair, see \cite{FL,SP1,SP2}. The zeros of $\zeta_1(s)$ were studied in the 1960's by Spira who showed that all the zeros in the critical strip must lie on the critical line. Moreover, Spira also observed that between any two Gram points\footnote{It should be noted that Spira conducted his studies before the establishment of the current notations of the Lambert function.} lies a zeros of $\zeta_1(s)$, and vice versa, a feature to which we shall return shortly. 
   
  Consider the following Fig. \ref{fig:f3} which shows $ln \abs{\zeta \left ( \frac{1}{2} +it \right ) }$ (blue) and $ln \abs{\zeta_1 \left ( \frac{1}{2} +it \right ) }$ (orange) for $0 \leq t \leq 50$:
\begin{figure}[ht!]
	\centering
		\includegraphics[scale=0.4]{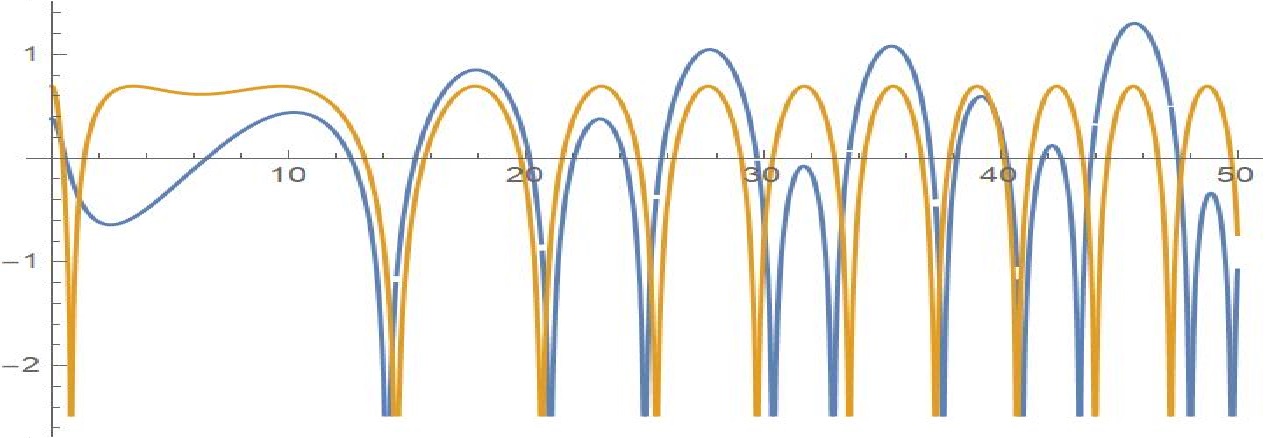} 
	\caption{Graphs of $ln \abs{\zeta \left ( \frac{1}{2} +it \right ) }$ (blue) and $ln \abs{\zeta_1 \left ( \frac{1}{2} +it \right ) }$ (orange) for $0 \leq t \leq 50$.}
\label{fig:f3}
	\end{figure}
	
As one can see, the zeros of $\zeta_1(s)=1+\chi(s)$ already serve as an initial crude approximation of the zeros of $\zeta(s)$ on the critical line, in the prescribed region. Recently, the zeros of $1+ \chi(s)$ were further studied (to a certain extent re-discovered) by Franca and LeClair in \cite{FL,FL2}, where they showed that they can be described in terms of the Lambert function, see \cite{CGHJK,Hay}.

For large $t$ we have by Stirling's formula 
\begin{equation} 
\label{eq:stir} 
\Gamma(\sigma+it)=\sqrt{\frac{2 \pi}{\sigma+it}} \left ( \frac{\sigma+it}{e} \right )^{\sigma+it} \left ( 1+ O \left ( \frac{1}{t} \right ) \right ).
\end{equation} 
It also follows that 
\begin{equation}
\label{eq:chi-asymp}
\chi(\sigma+it)=\left ( \frac{2 \pi}{t} \right )^{\sigma+it -\frac{1}{2}} e^{i (t+\frac{\pi}{4}) } \left ( 1+ O \left (\frac{1}{t} \right ) \right ) .
\end{equation}
Hence, define
\begin{equation}
\label{eq:chi-overline}
\overline{\chi}(\sigma,t) :=\left ( \frac{2 \pi}{t} \right )^{\sigma+it -\frac{1}{2}} e^{i (t+\frac{\pi}{4}) }.
\end{equation} 
Consider the equation 
\begin{equation} 
1+\overline{\chi}(\sigma,t) =1+\left ( \frac{2 \pi}{t} \right )^{\sigma+it -\frac{1}{2}} e^{i (t+\frac{\pi}{4}) }=0.
\end{equation} 
Taking absolute value implies $\sigma = \frac{1}{2}$ while taking argument implies the equation 
\begin{equation}
\label{eq:FL1}
 \frac{t}{2 \pi} \cdot ln \left (\frac{t}{2\pi e } \right ) = n-\frac{11}{8}.
\end{equation}
 Recall that the Lambert function is a multivalued function given by the branches of the inverse function of $we^w$. For each $r$ there is one branch, denoted $W_r(z)$ such that 
\begin{equation} 
W_r(z) e^{W_r(z)}=z. 
\end{equation} 
Over the real numbers only the two branches $W_0(x)$ (also called the principal branch) and $W_{-1}(x)$ are required. From Eq. \ref{eq:FL1} Franca and LeClair deduced that the zeros of $1+ \chi(s)$ can be approximated by $\rho^0_n = \frac{1}{2}+i t^0_n$ where $t^0_n$ are the solutions of 
\begin{equation} 
\label{eq:FLsol2}
t^0_n = \frac{(8n-11) \pi  }{4 W_0( \frac{8n-11}{8e} ) }. 
\end{equation}
More generally, let us consider the functions 
$$ B_k(s):= k^{-s}+ \chi(s) \cdot k^{s-1}.$$ 
By a similar argument the solutions of the equation $B_k(s)=0$ lie on the critical line and their imaginary parts are given by the following equation  
$$ \frac{t}{2 \pi} \cdot ln \left ( \frac{t}{2 \pi k^2 e} \right )= n -\frac{11}{8}. $$ 
Let us define 
$$ \begin{array}{ccc} \widetilde{t}^+_k(m) =  \frac{(8m-3)\pi}{4 W_{0} \left ( \frac{8m-3}{8 k^2 e} \right ) } & ; & \widetilde{t}^-_k(m)=\frac{(5-8m)\pi}{4 W_{-1} \left ( \frac{5-8m}{8 k^2 e} \right ) }. \end{array} $$
We have the following description of the zeros of $B_k(s)$ on the critical line, generalizing the Franca-LeClair formula for the case $k=1$: 

\begin{prop} 
\label{prop:2.1}
Let $s^k_m=\frac{1}{2}+i t^k_m$ be the zeros of $B_{k}(s)$ on the critical line. Then $t^k_m=-\overline{t}^k_{-m}$ and for the zeros in the upper half plane with $t^k_m>0$ the following holds: 
\begin{enumerate} 

\item $t^k_{m} = \widetilde{t}^-_k(m)$ for $1 \leq m \leq k^2$.

\item $t^k_{m} = \widetilde{t}^+_k(m-2 k^2)$ for $k^2+1 \leq m$.
\end{enumerate}
\end{prop}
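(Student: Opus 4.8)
The plan is to take as the starting point the phase equation displayed just before the statement, namely
$$\frac{t}{2\pi}\ln\left(\frac{t}{2\pi k^2 e}\right) = n - \frac{11}{8}, \qquad n \in \mathbb{Z},$$
which records the argument condition for $B_k(\tfrac12 + it)=0$ after the modulus condition has forced $\sigma = \tfrac12$. (Equivalently, on the critical line $B_k(\tfrac12+it) = 2e^{-i\theta(t)}\cos(\theta(t) - t\ln k)$ with $\theta$ the Riemann--Siegel theta function, so the zeros are exactly the roots of $\cos(\theta(t)-t\ln k)=0$ and the displayed equation is its Stirling asymptotic.) First I would put the equation into Lambert form: setting $L := \ln\left(\tfrac{t}{2\pi k^2 e}\right)$ one has $t = 2\pi k^2 e^{L+1}$, and substitution turns the phase equation into $L\,e^{L} = \tfrac{8n-11}{8k^2 e}$. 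By definition of the Lambert function this gives $L = W_r(z)$ with $z := \tfrac{8n-11}{8k^2 e}$, whence $t = \tfrac{2\pi(n-11/8)}{L}$, that is
$$t = \frac{(8n-11)\pi}{4\,W_r(z)}, \qquad z = \frac{8n-11}{8k^2 e}.$$
All remaining work is deciding, for each admissible $n$, which real branches $W_0,W_{-1}$ occur, and then relabelling.

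The key step is a real-variable analysis of $g(t) := \frac{t}{2\pi}\ln\left(\tfrac{t}{2\pi k^2 e}\right)$, the left side of the phase equation. From $g'(t) = \tfrac{1}{2\pi}\ln\left(\tfrac{t}{2\pi k^2}\right)$ one sees $g$ decreases on $(0,2\pi k^2)$ and increases on $(2\pi k^2,\infty)$, with $g(0^+)=0^-$, a unique minimum $g(2\pi k^2) = -k^2$, and $g\to+\infty$. Hence $g(t)=n-\tfrac{11}{8}$ has two positive roots when $-k^2 < n-\tfrac{11}{8} < 0$, i.e. precisely for the $k^2$ integers $2-k^2 \le n \le 1$ (the root before $2\pi k^2$ has $L<-1$, the one beyond has $L>-1$), and a single positive root when $n-\tfrac{11}{8}\ge 0$, i.e. $n\ge 2$. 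Since $W_{-1}$ takes values $\le-1$ and $W_0$ values $\ge-1$ on $[-1/e,0)$, this identifies the roots with $t<2\pi k^2$ as the $W_{-1}$ family and those with $t>2\pi k^2$ as the $W_0$ family; in particular there are exactly $k^2$ zeros from $W_{-1}$, lying below every $W_0$ zero. One checks the extreme argument $z=\tfrac{5-8k^2}{8k^2 e}$ (attained at $n=2-k^2$) still exceeds $-1/e$, so $W_{-1}$ is defined throughout, and the smallest level $n-\tfrac{11}{8} = -k^2+\tfrac58$ stays a fixed distance $\tfrac58$ above the minimum, so the two roots never coincide and the count is clean.

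It then remains to reindex so that $m$ orders the positive ordinates increasingly. For the $W_{-1}$ family, $g$ decreasing on $(0,2\pi k^2)$ means $t$ decreases as $n$ grows, so $m = 2-n$ lists them increasingly for $1\le m\le k^2$ and sends $8n-11 \mapsto 5-8m$, giving $t^k_m = \widetilde{t}^-_k(m)$, which is (1). For the $W_0$ family, indexed by $n\ge 2-k^2$ and increasing in $n$, I set $m = n+2k^2-1$ so the first $W_0$ ordinate gets $m=k^2+1$; substituting $n = m-2k^2+1$ gives $8n-11 = 8(m-2k^2)-3$ and $z = \tfrac{8(m-2k^2)-3}{8k^2 e}$, which is exactly $t^k_m = \widetilde{t}^+_k(m-2k^2)$, proving (2). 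The symmetry $t^k_m = -\overline{t}^k_{-m}$ follows from the reflection identity $\overline{B_k(\bar s)} = B_k(s)$ (a consequence of $\chi$ being real on the real axis, by Schwarz reflection), which sends the zero $\tfrac12+it^k_m$ to $\tfrac12-it^k_m$; as the ordinates are real the conjugate is cosmetic.

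The step I expect to be the main obstacle is making the count and order genuinely rigorous rather than asymptotic: the Lambert formulas solve the Stirling-truncated phase equation, so one must control the gap between $\theta(t)$ (equivalently $\arg\chi$) and its asymptotic expansion and argue that this perturbation neither creates nor destroys critical-line zeros and preserves their order. The separation estimate above --- the nearest level $n-\tfrac{11}{8}$ sits a definite distance $\tfrac58$ above the degenerate value $-k^2$ --- is what I would lean on to show the roots are simple and well-separated, so a small perturbation cannot alter the combinatorial picture. Turning this into a clean uniform bound, especially in the transition region $t\approx 2\pi k^2$ and for small $k$ where $t$ need not be large, is the delicate point.
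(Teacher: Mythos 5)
The paper gives no proof of Proposition \ref{prop:2.1} beyond the remark that it follows ``by a similar argument'' to the $k=1$ Franca--LeClair case; your argument is exactly that derivation carried out in full, and it checks out: the reduction to $Le^{L}=\frac{8n-11}{8k^{2}e}$, the monotonicity analysis of $g$ with minimum $-k^{2}$ at $t=2\pi k^{2}$ yielding exactly $k^{2}$ roots on the $W_{-1}$ branch below all $W_{0}$ roots, and the reindexings $m=2-n$ and $m=n+2k^{2}-1$ all reproduce the stated formulas (the omitted factor $k^{-1/2}$ in your cosine identity is immaterial for the zero set). You are also right to flag that the Lambert formulas solve the Stirling-truncated equation $1+\overline{\chi}=0$ rather than $B_{k}(s)=0$ itself, so the identities hold only up to the $O(1/t)$ error in Eq.~\ref{eq:chi-asymp}; the paper itself glosses over this (it says the $k=1$ zeros are ``approximated by'' the Lambert expressions while the proposition asserts equality), so this caveat is a gap in the statement rather than in your proof.
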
 

As an illustration, consider for instance the following Fig. \ref{fig:f4} which shows the graph of $ln \abs{B_3 \left ( \frac{1}{2} +it \right ) }$ for $0 \leq t \leq 150$:
\begin{figure}[ht!]
	\centering
		\includegraphics[scale=0.4]{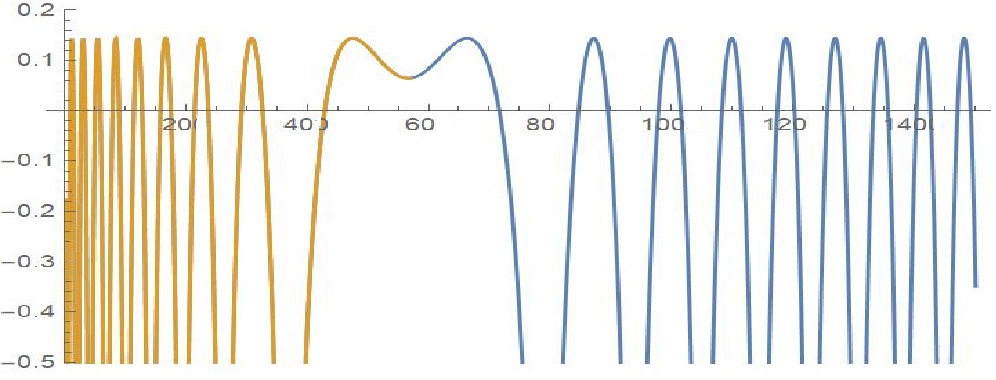} 
	\caption{ Graph of $ln \abs{\widetilde{B}_3 \left ( \frac{1}{2} +it \right ) }$ for $0 \leq t \leq 150$.}
\label{fig:f4}
	\end{figure}
	 In Fig. \ref{fig:f4} the region of the first \emph{negative} zeros $t^k_m=\widetilde{t}^-_k(m)$ for $m=1,...,9$ is marked in orange and the region of the \emph{positive} zeros $t^k_m=\widetilde{t}^+_k(m-2k^2)$ is marked in blue. In view of the above, we also obtain the following alternative description of the zeros of $B_k(s)$ in the relevant region, which does not involve the Lambert function: 
\begin{prop}[alternative description of the zeros of $B_k(s)$]
The zeros of $B_k(s)$ in the region $2 N  \leq t \leq 2(N+1) $ are given by $$\widetilde{T}_j^k  =\frac{ 8 N+ 8\pi \cdot j - 11 \pi}{4 \left (ln(N)-ln(\pi \cdot k^2) \right )}$$
for
\begin{equation} 
\frac{N}{\pi}  ln \left ( \frac{N}{\pi \cdot k^2 e} \right )+ \frac{11 }{8} \leq  j \leq \frac{N}{\pi} ln \left (\frac{N}{\pi \cdot k^2 e} \right ) +\frac{1}{\pi} ln \left (\frac{N}{\pi \cdot k^2} \right ) + \frac{11}{8}
\end{equation}
\end{prop}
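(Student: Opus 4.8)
The plan is to start from the transcendental equation governing the zeros on the critical line established above, namely
$$\frac{t}{2 \pi} ln \left ( \frac{t}{2 \pi k^2 e} \right ) = j - \frac{11}{8},$$
(with $j$ playing the role of the index $n$), which was itself obtained from the asymptotic form $\overline{\chi}(\sigma,t)$ of $\chi$ in Eq. \ref{eq:chi-asymp}, and to replace its solution in the short window $2N \leq t \leq 2(N+1)$ by the solution of the corresponding \emph{linearized} equation at the left endpoint $t = 2N$. Concretely, I would set
$$f(t) := \frac{t}{2 \pi} ln \left ( \frac{t}{2 \pi k^2 e} \right ),$$
and first record the elementary computation
$$f'(t) = \frac{1}{2 \pi} ln \left ( \frac{t}{2 \pi k^2} \right ),$$
whence at the left endpoint $f(2N) = \frac{N}{\pi} ln \left ( \frac{N}{\pi k^2 e} \right )$ and $f'(2N) = \frac{1}{2 \pi} \left ( ln(N) - ln(\pi k^2) \right )$.

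The second step is to impose $f(2N) + f'(2N)(t - 2N) = j - \frac{11}{8}$ and solve for $t$. Substituting the two values above and using $ln \left ( \frac{N}{\pi k^2 e} \right ) = ln(N) - ln(\pi k^2) - 1$, the contributions of $2N$ cancel and one is left precisely with
$$t = \frac{2N + 2 \pi j - \frac{11 \pi}{4}}{ln(N) - ln(\pi k^2)} = \frac{8N + 8 \pi j - 11 \pi}{4 \left ( ln(N) - ln(\pi k^2) \right )} = \widetilde{T}_j^k,$$
which is the asserted formula. Since $f'(2N) > 0$ as soon as $N > \pi k^2$, the map $j \mapsto \widetilde{T}_j^k$ is strictly increasing, so the admissible values of $j$ are exactly those for which $\widetilde{T}_j^k$ lands in $[2N, 2(N+1)]$. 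Imposing $\widetilde{T}_j^k = 2N$ and $\widetilde{T}_j^k = 2(N+1)$ and solving the two linear conditions for $j$ reproduces verbatim the lower bound $\frac{N}{\pi} ln \left ( \frac{N}{\pi k^2 e} \right ) + \frac{11}{8}$ and the upper bound $\frac{N}{\pi} ln \left ( \frac{N}{\pi k^2 e} \right ) + \frac{1}{\pi} ln \left ( \frac{N}{\pi k^2} \right ) + \frac{11}{8}$.

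The step I expect to be the main obstacle is the honest control of the linearization error, together with its reconciliation with the error already incurred in passing from $\chi$ to $\overline{\chi}$. Since $f''(t) = \frac{1}{2 \pi t}$, the second-order Taylor remainder over an interval of length $2$ is $O(1/N)$; dividing by $f'(2N) \asymp \frac{ln(N)}{2 \pi}$ shows that replacing the exact root by $\widetilde{T}_j^k$ perturbs $t$ by only $O \left ( \frac{1}{N \, ln(N)} \right )$, an error of the same negligible order as the $O(1/t)$ term inherited from Eq. \ref{eq:chi-asymp}. For this reason the cleanest formulation is to read the proposition as an approximation carrying an error term of this size rather than as an exact identity, and to verify that the number of integers $j$ in the stated window matches the number of zeros of $B_k(s)$ in $2N \leq t \leq 2(N+1)$ furnished by Proposition \ref{prop:2.1}.
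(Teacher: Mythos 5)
Your proposal is correct and is essentially the paper's own argument: the paper approximates $B_k\left(\tfrac{1}{2}+it\right)$ on the window $2N \le t \le 2(N+1)$ by a sine whose phase is linear in $t$ with slope $\tfrac{1}{2}\ln\left(\tfrac{N}{\pi k^2}\right)$, which is exactly your first-order Taylor expansion of $f(t)=\tfrac{t}{2\pi}\ln\left(\tfrac{t}{2\pi k^2 e}\right)$ at $t=2N$, and the zeros of that sine are the stated $\widetilde{T}_j^k$ with the same range of $j$. Your closing error estimate $O\left(\tfrac{1}{N\,\ln N}\right)$, and the remark that the statement should be read as an approximation rather than an exact identity, are a welcome sharpening that the paper leaves implicit behind its ``$\approx$''.
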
    
\begin{proof} In view of Eq. \ref{eq:chi-overline}, in the region $2 N  \leq t \leq 2(N+1) $ the following approximation holds  
\begin{equation}
 ln  \left| B_k \left (\frac{1}{2} +it \right ) \right| \approx \left| \frac{2}{ \sqrt{k}}  sin \left (\frac{t}{2} ln \left ( \frac{N}{\pi k^2} \right ) + \frac{11 \pi }{8}-N  \right ) \right|.
 \end{equation}
The zeros of the approximating function are given by 
\begin{equation}
\widetilde{T}_j^k  =\frac{ 8 N+ 8\pi \cdot j - 11 \pi}{4 \left (ln(N)-ln(\pi \cdot k^2) \right )}
\end{equation}
for $j \in \mathbb{Z}$. 
\end{proof}  

In particular, let us define the distance between two consecutive zeros of $B_k(s)$ in the region $2N\leq t \leq 2(N+1)$ by 
\begin{equation}
\label{eq:dist}
d(N,k):= \left|\frac{ 2\pi }{ln(N)-ln(\pi \cdot k^2)} \right|
\end{equation}
Let us conclude this section with the following two remarks:

	 \begin{rem}[The Franca-LeClair approach for RH]
	 In \cite{FL,FL2} Franca and LeClair further (formally) introduce the equation 
 \begin{equation}
 \label{eq:FL4} 
\frac{t}{2 \pi} \cdot ln \left ( \frac{t}{2 \pi e} \right ) +\frac{1}{\pi} lim_{\delta \rightarrow 0^+} \left ( arg \left ( \zeta \left ( \frac{1}{2} + \delta +it \right ) \right ) \right ) =n- \frac{11}{8}.
\end{equation}
Note that the leading term of this equation is 
\begin{equation}
\label{eq:FL1}
 \frac{t}{2 \pi} \cdot ln \left (\frac{t}{2\pi e } \right ) = n-\frac{11}{8},
\end{equation}
which admit the unique solution $t_n^0$ for any $n$, as defined in Eq. \ref{eq:FLsol2}. In contrast, the full Eq. \ref{eq:FL4} is only formally defined due to the fact that the term  
\begin{equation}
\label{eq:FL-arg}
\frac{1}{\pi} lim_{\delta \rightarrow 0^+} \left ( arg \left ( \zeta \left ( \frac{1}{2} + \delta +it \right ) \right ) \right )
\end{equation} is not nessecerally well defined for all zeros of $\zeta(s)$. Franca and LeClair show that if Eq. \ref{eq:FL-arg} is well defined then equation Eq. \ref{eq:FL1} has a solution $t_n$ for any $n$, and to any such solution corresponds a zero $\rho_n=\frac{1}{2}+it_n$ of $\zeta(s)$ on the critical line. In particular, it is shown that the RH is equivalent to the question of the well-defindness of the term Eq. \ref{eq:FL-arg}.

 It should be noted that the question of the well-defindness of the term Eq. \ref{eq:FL-arg} is a highly non-trivial and elusive matter by itself and, as for RH, to which it is equivalent, any substantial reason for the well-defindness of the term Eq. \ref{eq:FL-arg} is currently lacking. Moreover, in \cite{FL,FL2} Franca and LeClair also study the analogous equation for the Davenport-Heilbronn function $\mathcal{D}(s)$, which is an $L$-function satisfying a functional equation but for which the RH fails, and as a result for which the analogous version of Eq. \ref{eq:FL-arg} is not well defined for all zeros. The results of \cite{FL,FL2}, however, do not offer insight on the reason for the difference between $\zeta(s)$ and $\mathcal{D}(s)$.    

 On the other hand, the results of Franca-LeClair do imply the important fact that the RH is equivalent to showing that to any zero of $1+\chi(s)$ corresponds a unique zero of $\zeta(s)$ on the critical line, whose imaginary part is given as a solution of Eq. \ref{eq:FL4}. In the setting of Franca-LeClair, the relation between the zeros of the functions is a formal matter of adding the term in Eq. \ref{eq:FL-arg} (which is a-priori not well defined) to Eq. \ref{eq:FL1}. However, again, the results of \cite{FL,FL2} do not give insight as of why such a relation between the zeros of $\chi(s)$ and $\zeta(s)$ should exist in practice.   
	 \end{rem}
 
\begin{rem}[Gram points] \label{rem:gram} Recall that the Riemann-Siegel theta function is defined by 
\begin{equation} 
\nu(t) = arg \left ( \Gamma \left ( \frac{1}{4} + \frac{i t}{2} \right ) \right ) -\frac{t}{2} ln(t).  
\end{equation} 
The $n$-th Gram point is given as the unique solution of the equation $
\nu(g_n) = \pi n$, see for instance \cite{E}. The Gram points could be approximated by 
\begin{equation} 
g_n \approx \frac{(8n+1) \pi  }{4 W_0( \frac{8n+1}{8e} ) }. 
\end{equation}  
From this, and the definition of $t^0_n$ in Eq. \ref{eq:FLsol2} Spira's observation that a Gram point $g_n$ is always found between any two consecutive elements of $t^0_n$ (and vice versa), follows immediately. 

The first few Gram points were computed by Gram in \cite{G} where he also observed that \emph{typically} one has "Gram's law": 
\begin{equation} 
\label{eq:gram}
Re \left ( \zeta \left ( \frac{1}{2} +it \right ) \right ) = (-1)^n Z(g_n) >0, 
\end{equation} 
where $
Z(t):= e^{i \nu (t)} \zeta \left ( \frac{1}{2}+it \right )$ is the Riemann-Siegel function. As $Z(t)$ is a real function, whenever Eq. \ref{eq:gram} is satisfied for two consecutive Gram points, $g_n$ and $g_{n+1}$, it implies the existence of a zero of zeta between these two point, on the critical line. However, in \cite{H}
Hutchinson computed the Gram points and the values $Z(g_n)$ up to $n=138$. In particular, Hutchinson found examples in which violations of Eq. \ref{eq:gram} occurs, the first such example occurring for $n=126$. In the next Section \ref{s:3} we will present a new interpretation of Gram's law in terms of the zeros of $\zeta_N(s)$. 
\end{rem} 

 \section{On the change of zeros of $\zeta_N(s)$ with respect to $N$ and their collisions} 
 \label{s:3} 

In the previous section we described the zeros of $\zeta_1(s)=2\widetilde{\zeta}_0(s)$ on the critical strip, and saw that they all lie on the critical line. In this section we are interested in studying how the zeros of $\zeta_N(s)$ (or $\widetilde{\zeta}_N(s)$\footnote{All arguments of this section apply equally well for the accelerated sections $\widetilde{\zeta}_N(s)$.}) change with respect to $N$. 

First recall that by Rouche's theorem, for any two complex-valued functions $F(s)$ and $G(s)$ holomorphic inside some region $K \subset \mathbb{C}$ with closed contour $\partial K$, if $\abs{G(s)} < \abs{F(s)}$ on $\partial K$, then $F(s)$ and $F(s) + G(s)$ have the same number of zeros, with multiplicity, inside $K$. In our case, Let us set 
 \begin{equation}
 \begin{array}{ccc} F_N(s)=\zeta_1(s) & ; & G_N(s)=\sum_{n=2}^N  B_n(s), \end{array}
\end{equation} 
such that $\zeta_N(s)=F_N(s)+G_N(s)$, by definition. It is easy to show that in compact regions with $\abs{\sigma}>>0$ big enough, the condition
$\abs{G_N(s)} < \abs{F_N(s)}$ is satisfied. Hence, we have: 
\begin{itemize}
\item The zeros of $\zeta_1(s)$ in the critical strip are in one-to-one correspondence with the zeros of the sections $\zeta_N(s)$, for any $N$, and no new zeros in the critical strip can be created in the transition from $\zeta_1(s)$ to $\zeta_N(s)$.
 \end{itemize} 

 Moreover, it is expected that the addition of the error term of the AFE (especially in the transformed case) would also not create new zeros in the transition from $\widetilde{\zeta}_N(s)$ to $\zeta(s)$ in the region $2N \leq t \leq 2(N+1)$. Hence we get: 
 \begin{itemize}
\item The non-trivial zeros of the Riemann zeta function $\zeta(s)$ on the critical strip are in one-to-one correspondence with the zeros of the section $\zeta_N(s)$ for $N=\left [ \frac{t}{2} \right ]$ and the distance between them is exponentially small with respect to the size of $t$.
 \end{itemize} 
 
 It should be noted that this argument does not yet imply anything regarding the location of the zeros of $\zeta(s)$, but only on the existence of a one-to-one correspondence between the non-trivial zeros of $\zeta(s)$ and those of the sections $\zeta_N(s)$ from zero to $\left [ \frac{t}{2} \right ]$. In summary, in view of the above remark, we obtain: 
\begin{itemize} 
\item The RH would follow from showing that to any zero of $\widetilde{\zeta}_0(s)$ on the critical line corresponds a non-trivial zero of $\zeta_N(s))$ for $N=\left [ \frac{t}{2} \right ]$ lying \emph{on the critical line}. 
\end{itemize}
In other words
\begin{itemize}
\item Spira's RH for sections $\Rightarrow$ RH.
\end{itemize}
Before proceeding let us make the following remark: 
\begin{rem}[Comparison to the approach of Franca-LeClair] As mentioned in the previous section, in the setting 
of Franca-LeClair the zeros of $1+\chi(s)$ arise as leading term approximation of the full Franca-LeClair equation (Eq. \ref{eq:FL4}), which should theoretically coincide (assuming RH) with the zeros of $\zeta(s)$ on the critical line. In particular, this suggested theoretical relation is the result of the algebraic fact that Eq. \ref{eq:FL1} is the first-order approximation of Eq. \ref{eq:FL4}. In \cite{FL,FL2} Franca and LeClair also presented extensive numerical evidence for various connections between the properties of 
the zeros of $1+\chi(s)$ and those of the known zeros of $\zeta(s)$. However, Franca and LeClair do not suggest further formal explanations for why a connection between the two collections exists.  

In our setting of the AFE the collection of zeros of $1+\chi(s)$ arises as the zeros of the zero-th section $\widetilde{\zeta}_0(s)$. However, in our setting we also get the corresponding collection zeros of the section $\widetilde{\zeta}_N(s)$, for any other $N \in \mathbb{N}$. In this sense the zeros of $\widetilde{\zeta}_N(s)$ could be thought of as a generalization of the Franca-LeClair approximations for any $N$. In particular, we view these collections as forming a gradual "bridge" between the zeros of $1+\chi(s)$ and the actual zeros of $\zeta(s)$ essentially attained for $N=\left [ \frac{t}{2} \right ]$, which gives a satisfying explanation for why the relation between the FL zeros and those of $\zeta(s)$ exists.    
\end{rem}
 
 We are hence interested in studying the way the zeros change with respect to $N$. As the parameter $N$ is discrete, we need to make sense of what we mean by "change with respect to $N$". For any $N \in \mathbb{N}$ define 
 \begin{equation}
 \zeta^t_N(s) = (1-t) \cdot \zeta_N(s)+t \cdot \zeta_{N+1}(s),
 \end{equation} 
 for $0 \leq t \leq 1$. In particular, $\zeta^0_N(s) =\zeta_N(s)$ and $\zeta^1_N(s)=\zeta_{N+1}(s)$. Let us consider the way the zeros of $\zeta_N^t(s)$ change with respect to $t$. Assume the zeros of $\zeta_N(s)$ in the critical strip are given by $\rho^N_n$ for $n \in \mathbb{Z}$. For any $n \in \mathbb{Z}$ we can define the continuous family of zeros $\rho_n^N(t)$ of $\zeta^t_N(s)$. This can be done only as long as no double zeros occur. If a double zero occurs we refer to such an instance as a "collision" between zeros. Starting from $\zeta_1(s)$ we can thus inductively define $\rho_n^N$ for any $\zeta_N(s)$ (similarly $\widetilde{\rho}_n^N$ for any $\widetilde{\zeta}_N(s)$) as follows: 
\begin{enumerate} 
 \item As long as no collisions occur define inductively $\rho_n^{N+1}=\rho_n^N(1)$. The main feature is that collisions can occur only between two consecutive zeros $\rho_n^N$ and $\rho_{n+1}^N$ and as long as no collisions occur the zeros \underline{must remain on the critical line}. 
 \item If a collision between two consecutive zeros $\rho_n^N(t)$ and $\rho^N_{n+1}(t)$ on the critical line occurs for some $0 \leq t_0 \leq 1$ then the two zeros get "pushed off the critical line" in a symmetric manner along the critical line, so that we can continue to define $\rho_n^N(t)$ and $\rho^N_{n+1}(t)$ for $t_0 \leq t \leq 1$ such that $\rho_n^N(t)=1-\rho^N_{n+1}(t)$.
\item After two consecutive zeros collided, for some $N$ and $t$, the only way a collision can occur again is if the two zeros return to the critical line. This is because the only two zeros symmetric along the critical line can meet along the critical line.  
  \end{enumerate}

Let us consider the following example:

\begin{ex}[Collisions for $\widetilde{\zeta}_N(s)$] The following Fig. \ref{fig:f5} illustrates the phenomena of collisions by showing the graphs of $ln \abs{\zeta(\frac{1}{2}+it)}$ (blue) and
$ln \abs{\widetilde{\zeta}_N(\frac{1}{2}+it)}$ for $N=8,9$ (orange) in the range $86 \leq t \leq 90$: 
      
       \begin{figure}[ht!]
	\centering
		\includegraphics[scale=0.35]{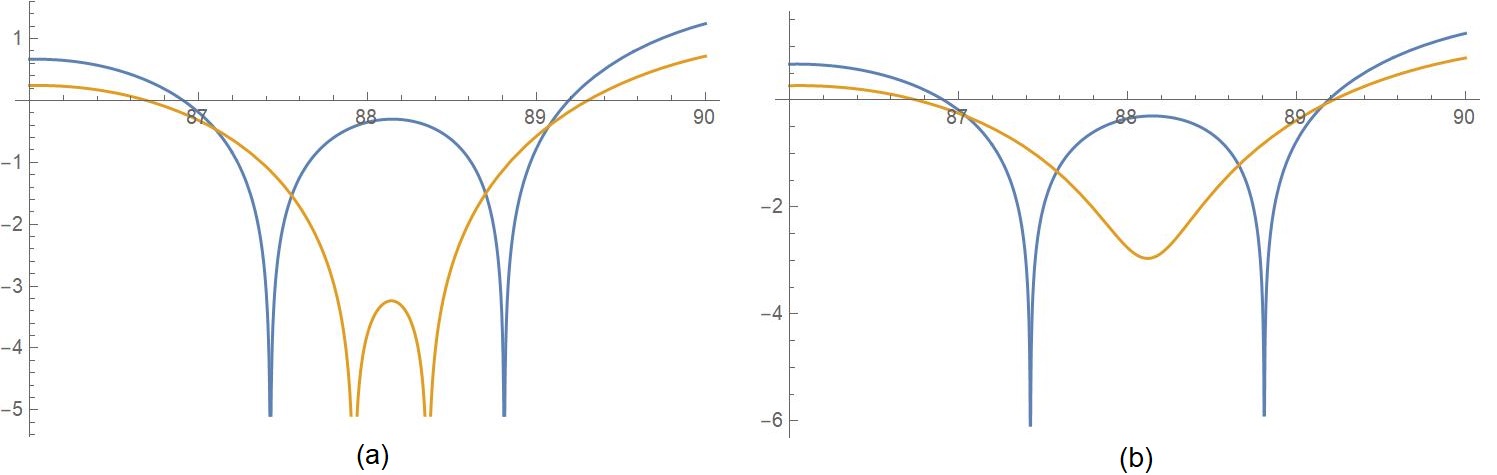} 
	\caption{Graphs of $ln \abs{\zeta(\frac{1}{2}+it)}$ (blue) and
$ln \abs{\widetilde{\zeta}_N(\frac{1}{2}+it)}$ for $N=8$ (a) and $N=9$ (b) (orange) in the range $86 \leq t \leq 90$.}
\label{fig:f5}
	\end{figure}
Figure \ref{fig:f5} shows that $\widetilde{\zeta}_8(s)$ admits two zeros on the critical line in this range. However, $\widetilde{\zeta}_9(s)$ no longer has zeros on the critical line in this range. The reason is that a collision between the two zeros occurred in $\widetilde{\zeta}^t_8(s)$ for a certain $0 \leq t \leq 1$. It should be noted that $\widetilde{\zeta}_9(s)$ has two zeros in the range $86 \leq t \leq 90$ given approximately by $0.74 + 88.12i$ and $0.25 + 88.12i$. The zeros of $\widetilde{\zeta}_N(s)$ remain off the critical until $N=22$ where a second collision occurs and the zeros return to the critical line. This is shown in Fig. \ref{fig:f6} 
	  \begin{figure}[ht!]
	\centering
		\includegraphics[scale=0.35]{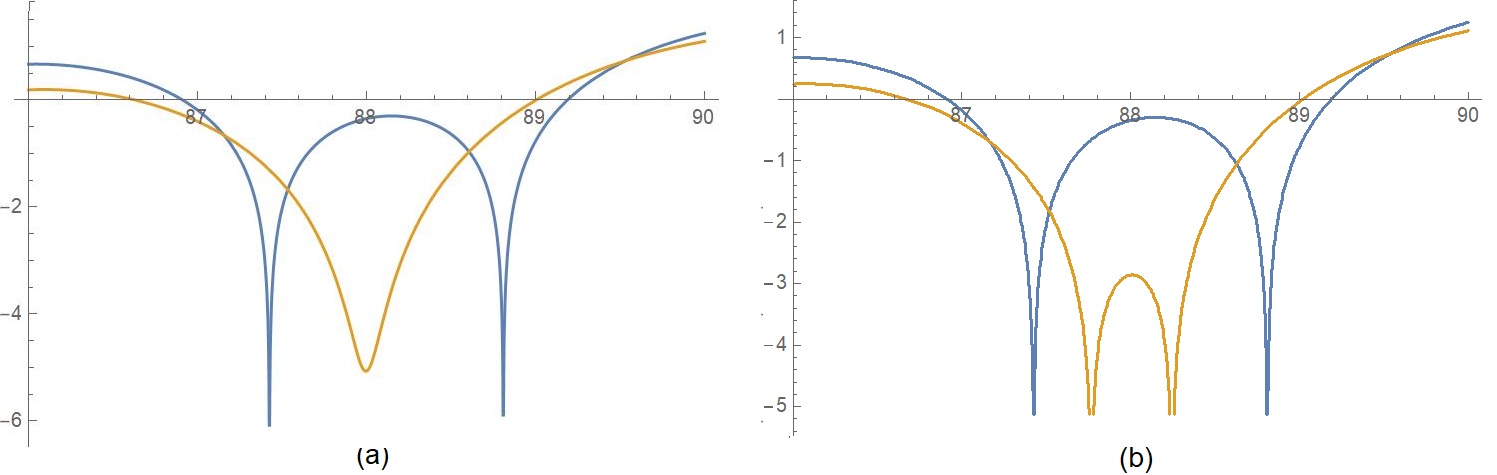} 
	\caption{Graphs of $ln \abs{\zeta(\frac{1}{2}+it)}$ (blue) and
$ln \abs{\widetilde{\zeta}_N(\frac{1}{2}+it)}$ for $N=22$ (a) and $N=23$ (b) (orange) in the range $86 \leq t \leq 90$.}
\label{fig:f6}
	\end{figure}
	
As one can see, even though a collision occurred at $N=8$ and "pushed the zeros off" the critical line, an additional complementing collision occurred at $N=22$ brining the zeros "back to the critical line". 
	\end{ex} 
In general, the RH would follow if this phenomena holds for any consecutive pair of zeros. That is:
\begin{itemize} 
\item The RH would follow if for any pair of zeros the following holds: for any collision "pushing the zeros off" the critical line occurring for certain $N$ there must occur a corresponding collision "pushing the zeros back" to the critical line for later $N'$.   
\end{itemize} 
In fact, we will not consider this question directly. Instead, we will suggest in the next section a method to avoid collisions all together, by changing the order of summation in the elements of $\zeta_N(s)$. Let us conclude this section with the following two remarks:

\begin{rem}[Non resolved collisions for the classical AFE range of approximation]
It should be noted that the sections $\zeta_N(s)$ approximate zeta in two different ways: (a) due to the functional equation it approximates $\zeta(s)$ in the region $\sqrt{2 \pi N} \leq t \leq 2 \pi N$ (which is our main concern). (b) due to the classical approximate functional equation it approximates $\frac{1}{2} \zeta(s)$ in the region $2 \pi N^2 \leq t \leq 2 \pi (N+1)^2$. 

It was already observed by Spira that the sections $\zeta_N(s)$ might admit zeros off the critical line in the region $2 \pi N^2 \leq t \leq 2 \pi (N+1)^2$ (in which according to the AFE it approximates $\frac{1}{2} \zeta(s)$). For instance, for $N=5$ the section $\zeta_5(s)$ admits two zeros off the critical line in the region $219\leq t \leq 222$. The reason for this is that a collision occurs at the previous $N=4$ and is not resolved. It should be noted that the number $N=5$ of sections required for the AFE approximation is extremely small compared to the number of sections $N=\left [ \frac{t}{2} \right]=110$ required for the approximation 
of $\zeta(s)$ in this region, which is the approximation we are interested in.
\end{rem}

\begin{rem}[A new interpretation of Gram's law]
In view of Remark \ref{rem:gram} the elements $t^0_n$ are the zeros of $\abs{1+\chi(s)}$, while the Gram points are actually exactly the local maximum points of the same function on the critical line. Hence, the Gram point could be considered as the "middle point" between two zeros of $1+\chi(s)$. Recall that the $126$-th zero is a zero which violates Gram's law. Figure \ref{fig:f7} shows the way the elements $t^N_{126}=Im(\widetilde{\rho}_{126}^N)$ (orange) of $\widetilde{\zeta}_N(s)$ change for $0 \leq N \leq 89$:  

\begin{figure}[ht!]
		\centering
	\includegraphics[scale=0.9]{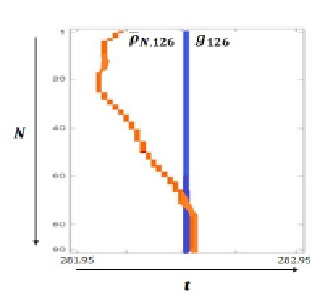}
	\caption{The elements $t^N_{126}=Im(\widetilde{\rho}_{126}^N)$ (orange) and the Gram point $g_{126}$ (blue) for $0 \leq N \leq 89$}
\label{fig:f7}	
	\end{figure}

As one can see in Fig. \ref{fig:f7} during the process of development of the zeros from zero to $N=89$ the imaginary part eventually crosses the Gram point, that is the middle point. Hence, in general, "Gram's law" can be re-phrased as:

\begin{itemize} 
\item Gram's law is the observation that (for each individual $n$) the elements $t^N_n=Im(\widetilde{\rho}^N_n)$ do not tend to eventually cross the Gram point $g_n$, that is, the overall distance travelled by $t^N_n$ is usually less than half the distance between the intial position of the zeros for $N=0$.
\end{itemize}
The following features should be mentioned: 
\begin{enumerate}
\item In Gram's law the point $g_n$ (blue), which is a local maxima of $\abs{\zeta_0(\frac{1}{2}+it) }$, is kept "static" and does not change with respect to $N$. If, however, instead of $g_n$ one considers the local maxima of $ \abs{\widetilde{\zeta}_N(\frac{1}{2}+it)}$ one actually obtains a statement which is essentially our approach to the RH. It should be noticed that the persistence of such local maxima between two consecutive zeros is equivalent to the statement that no collisions occur and their reappearance is the result of the occurrence of a second collision. In this sense, our approach to the RH could be considered as a "dynamic", more refined, version of the cruder "static" Gram's law.    

\item As phrased above Gram's law is a statement about the typical process of development of an \emph{individual} zero, that is of the sequence $t^N_n$ for a \emph{given individual $n$}. Indeed, that the sequence $t_n^N$ for a given individual zero does not tend to pass an overall distance of half the distance between $t^0_{n}$ and $t^0_{n+1}$ (recall formula Eq. \ref{eq:dist}). In our approach, however, we do not consider the behaviour of an individual zero but rather study the \emph{mutual development of a pair of consecutive zeros}. In particular, we are interested in collisions, or lack of collisions, for \emph{pairs} of zeros. In this sense, our approach to the RH could be rephrased as saying that the sequences $t^N_n$ and $t^N_{n+1}$ for a pair of consecutive zeros \emph{can never} mutually pass together an overall distance which exceeds the original distance between $t^0_n$ and $t^0_{n+1}$. In this sense, RH could considered as a "twice stronger" statement than Gram's law.      
 \end{enumerate}
\end{rem}  

Let us consider the following example: 

\begin{ex}[A pair of non-colliding zeros] Figure \ref{fig:f8} shows the consecutive sequences $t^N_{132}$ (brown) and $t^N_{133}$ (blue) together with the imaginary parts zeros $t_{132}$ and $t_{133}$ (left) and the consecutive sequences $\widetilde{t}^N_{132}$ (brown) and $\widetilde{t}^N_{133}$ (blue) together with the imaginary parts of the zeros $\rho_{132}$ and $\rho_{133}$ (right). As one can see from Fig. \ref{fig:f8}, for the presented $132$-th and $133$-th zeros, the sequences $t_{132}^N$ and $t_{133}^N$ (respectively, $\widetilde{t}_{132}^N$ and $\widetilde{t}_{133}^N$) develop separately, and no collisions between the two occurs for any $N$. As a result, the sequences $\rho_{132}^N$ and $\rho_{133}^N$ (respectively, $\widetilde{\rho}_{132}^N$ and $\widetilde{\rho}_{133}^N$) remain on the critical line, for any $N$:
\begin{figure}[ht!]
	\centering
		\includegraphics[scale=0.35]{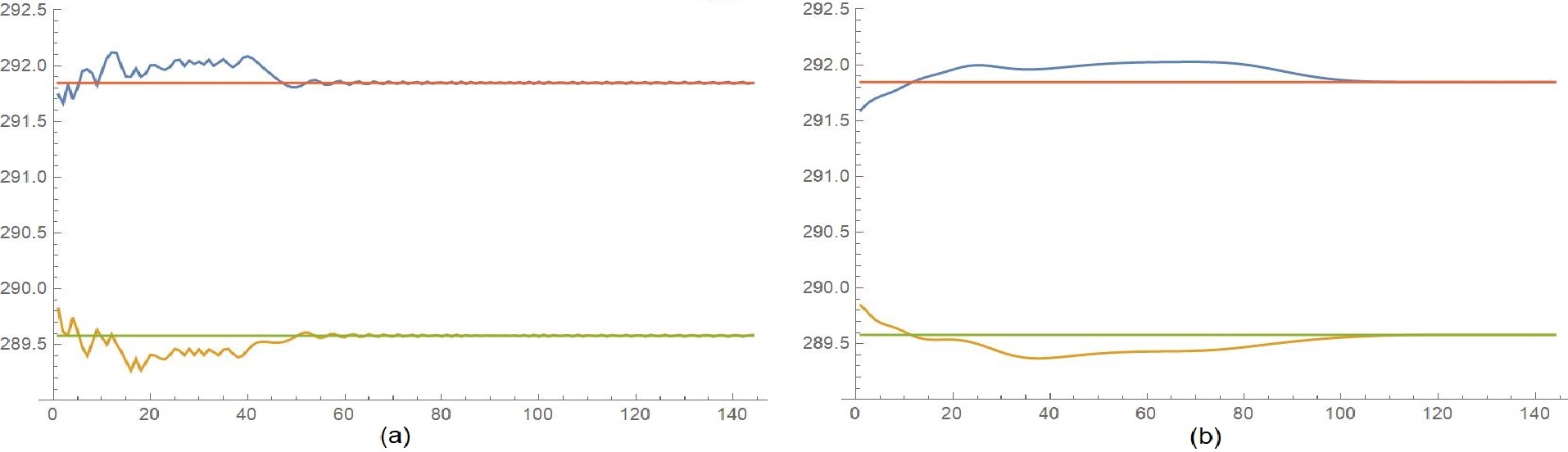} 
	\caption{ (a) The consecutive sequences $t^N_{132}$ (brown) and $t^N_{133}$ (blue) together with $t_{132}$ and $t_{133}$. (b) The consecutive sequences $\widetilde{t}^N_{132}$ (brown) and $\widetilde{t}^N_{133}$ (blue) together with $t_{132}$ and $t_{133}$  for $N=1,...,150$.}
\label{fig:f8}
	\end{figure}

 It is obvious that for any such pair of consecutive zeros $\rho_n$ and $\rho_{n+1}$, for which a collision does not occur in their corresponding sequences, both zeros must remain on the critical line. That is, any such pair of non-colliding zeros nessecerally satisfy RH. In the next section we would consider the more "interesting" case of colliding zeros. 

It should also be noted that both the classical $\rho_n^N$ and transformed $\widetilde{\rho}_n^N$ sequences are non-colliding in the case of $n=132,133$ presented in Fig. \ref{fig:f8}. This is typical of the general case in which the classical and transformed sequences are seen to collide$\setminus$non-collide together. That is the collision property is an essential feature of a given pair of zeros and not of the specific sequence (classical or transformed) considered.
 \end{ex}
 
  \section{Avoiding collisions via the repelling re-arrangement of summation} 
 \label{s:4}  
 
 In the previous section we saw an example of non-colliding pair of zeros. Before explaining how collisions might be avoided let us first consider an example of colliding zeros. 

\begin{ex}[A pair of colliding zeros] The following Fig. \ref{fig:f9} shows the consecutive sequences $t^N_{725}$ (brown) and $t^N_{726}$ (blue) together with the imaginary parts of the zeros $t_{725}$ and $t_{726}$ (left) and the consecutive sequences $\widetilde{t}^N_{725}$ (brown) and $\widetilde{t}^N_{726}$ (blue) together with the imaginary parts of the zeros $\rho_{725}$ and $\rho_{726}$ (right) for $N=1,..,300$:
\begin{figure}[ht!]
	\centering
		\includegraphics[scale=0.35]{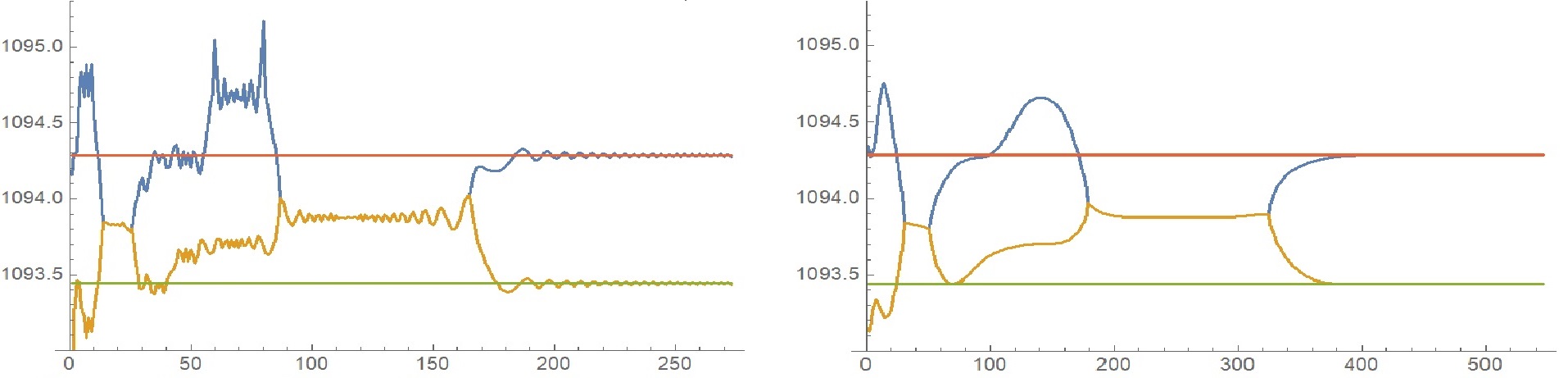} 
	\caption{ (a) The consecutive sequences $t^N_{725}$ (brown) and $t^N_{726}$ (blue) with $t_{725}$ (green) and $t_{726}$ (red). (b) The consecutive sequences $\widetilde{t}^N_{725}$ (brown) and $\widetilde{t}^N_{726}$ (blue) with $t_{725}$ (green) and $t_{726}$ (red) for $N=1,...,300$.}
\label{fig:f9}
	\end{figure}

As one can see from Fig. \ref{fig:f9}, for the presented $725$-th and $726$-th zeros, the sequences $t_{725}^N$ and $t_{726}^N$ (respectively, $\widetilde{t}_{725}^N$ and $\widetilde{t}_{726}^N$) do collide for certain values of $N$. However, as expected, after this collision the two zeros eventually part and continue on separate paths.   

Note that the development of the sequences $t_n^N$ and $\widetilde{t}^N_n$ bare some clear resemblance to the partial sums $S_N(s)$ and $\widetilde{S}_N(s)$, as discussed in Section \ref{s:1}, compare for instance Fig. \ref{fig:f2}. Indeed, we see that the sequences $t_n^N$ and $\widetilde{t}^N_n$ fluctuate around various values before stabilizing at the limit $t_n$ for $N>>0$. In fact, like in the case of the partial sums $S_N(s)$ the transitions in the sequence $t_n^N$ occur in the regions $\left [ \frac{t}{2 (M+1) \pi } \right ] \leq t \leq \left [ \frac{t}{2 M \pi } \right ]$. For instance the two collisions for $t_n^N$ in Fig. \ref{fig:f9} occur at the following intervals: 
 \begin{enumerate}
 \item The first collision occurs around the interval $\left [ \frac{t}{24 \pi } \right ] \leq N \leq \left [ \frac{t}{12 \pi } \right ]$
  \item The second collision occurs around the interval $\left [ \frac{t}{4 \pi } \right ] \leq N \leq \left [ \frac{t}{2 \pi } \right ]$. 
 \end{enumerate} 
We expect that up until around the half of these intervals the two zeros are "attracted towards each other" and afterwards, in the second half of the interval, get "repelled away from each other". According to this viewpoint, let us consider the following 
repelling re-arrangements of the indices 
\begin{equation} 
\begin{array}{ccc}
R(n):= \left \{ \begin{array}{cc} n & n \leq 13 \\ 41-n & 13<n<28 \\ n & 28 \leq n \leq 85 \\ 256 -n & 85<n<171 \\ n & 171 \leq n \end{array}  \right. & ; & 
\widetilde{R}(n):= \left \{ \begin{array}{cc} n & n \leq 30 \\ 84-n & 30<n<54 \\ n & 54 \leq n \leq 177 \\ 504 -n & 177<n<327 \\ n & 327 \leq n \end{array}  \right. .
\end{array}
\end{equation}   
The idea behind defining $R(n)$ and $\widetilde{R}(n)$ in such a manner is that we want to add the "repelling" elements before "attracting" ones out of an expectation that this would lead to a cancellation of the collisions. In particular, let us define the re-arranged sections 
\begin{equation} 
\begin{array}{ccc} \zeta_N(s;R) := \sum_{n=1}^{N} B_{R(n)} (s) & ; & \widetilde{\zeta}_N(s; \widetilde{R}) := \sum_{n=1}^{N} \widetilde{B}_{\widetilde{R}(n)} (s). \end{array} 
\end{equation}    
Set $\rho_n^N(R)$ and $\widetilde{\rho}_n^N(\widetilde{R})$ for the zeros of $\zeta_N(s;R)$ and $\widetilde{\zeta}_N(s;\widetilde{R})$ and denote by $t_n^N(R)$ and $\widetilde{t}_n^N(\widetilde{R})$ the corresponding imaginary parts. Consider Fig. \ref{fig:f10}:  
 \begin{figure}[ht!]
	\centering
		\includegraphics[scale=0.35]{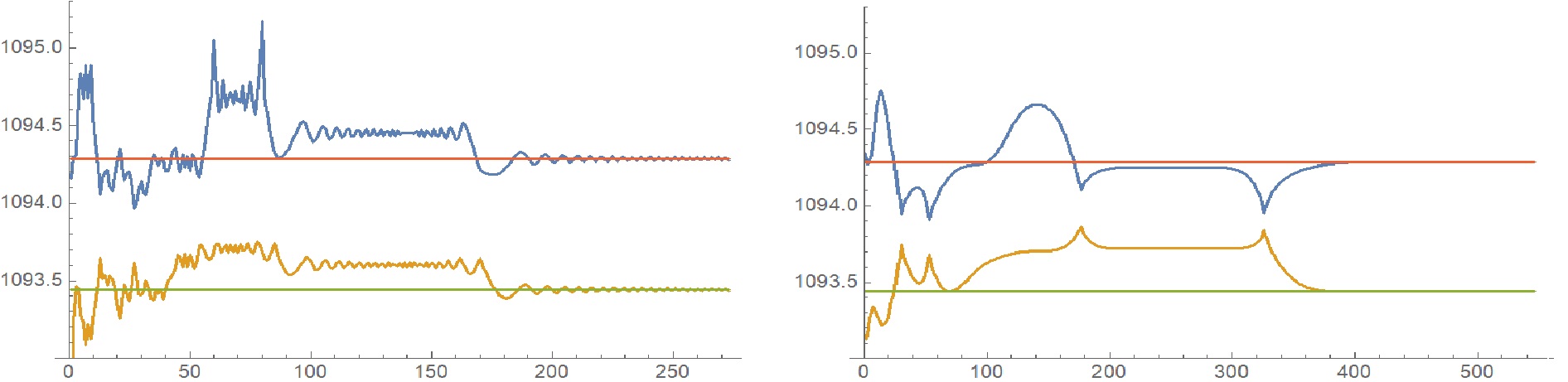} 
	\caption{ (a) The rearranged sequences $t^N_{725}(R)$ (brown) and $t^N_{726}(R)$ (blue) with $t_{725}$ (green) and $t_{726}$ (red). (b) The rearranged sequences $\widetilde{t}^N_{725}(\widetilde{R})$ (brown) and $\widetilde{t}^N_{726}(\widetilde{R})$ (blue)  with $t_{725}$ (green) and $t_{726}$ (red) for $N=1,...,300$.}
\label{fig:f10}
	\end{figure}
	
	Figure \ref{fig:f10} shows us is that, contrary to Fig. \ref{fig:f9} for the ordinary sections, for the rearranged sections $\zeta_N(s;R)$ and $\widetilde{\zeta}_N(s;\widetilde{R})$ the collisions have been altogether avoided. In other words, the collisions seen in Fig. \ref{fig:f9} are not an essential feature of the zeros but rather a by-product of the trivial order of summation considered. Of course, whenever a collision can be cancelled it implies that the pair of zeros satisfy RH. 
 \end{ex}
 We conjecture that the procedure described above holds in general and can be applied to any consecutive pair of zeros. Concretely, we conjecture:
	\begin{itemize} 
\item For any pair of consecutive zeros, collisions in the sequences $t_n^N$ and $t_{n+1}^N$ (or $\widetilde{t}_n^N$ and $\widetilde{t}_{n+1}^N$) can occur only along intervals of the form $\left [ \frac{t}{2 M_1 \pi } \right ] \leq N \leq \left [ \frac{t}{2 M_2 \pi } \right ]$. A collision can always be cancelled by rearranging the order of summation of the elements $B_n(s)$ ($\widetilde{B}_n(s)$) to be in reverse order along such an interval of collision.
\end{itemize}
	
	Let us conclude this section with the following remarks:
	
	\begin{rem}[The chaotic region] Let us note that intervals of the form 
	\begin{equation} \label{eq:int} \left [ \frac{t}{2 (M+1) \pi } \right ] \leq N \leq \left [ \frac{t}{2 M \pi } \right ]
	\end{equation} become smaller as $M$ grows and after some $M_0=M_0(t)$ become essentially trivial. Thus the range of existence of the regulated intervals only begins after an initial region of the form $N \leq \left [ \frac{t}{2 M_0 \pi } \right ]$, to which we refer as the "chaotic region" of $t$. In particular, our conjecture includes the assumption that collisions cannot occur at all during the initial chaotic region but rather only when the intervals Eq. \ref{eq:int} start becoming regulated, that is, big enough. We can rephrase by saying that the initial chaotic region does not have enough "energy" to bring the zeros into collision or to travel a mutual distance of more than $\abs{t^0_{n+1}-t^0_n}$, compare Eq. \ref{eq:dist}.  
		\end{rem} 
  \begin{rem}[Non-cancellation of collision for the Davenport-Heilbronn function] \label{rem:DH} Recall that the Davenport-Heilbronn functions are a class of Dirichlet functions which satisfy a functional equation but for which RH fails, that is for which there exist zeros off the critical line, see \cite{DH,T}. Consider the function 
\begin{equation}
\mathcal{D}(s)= \frac{(1-i \kappa)}{2} L(s,\chi_{5,2}) + \frac{(1+i \kappa)}{2} L(s,\overline{\chi}_{5,2})
\end{equation}
with 
\begin{equation} 
\kappa=\frac{\sqrt{10-2\sqrt{5}}-2}{\sqrt{5}-1}.
\end{equation}
The functional equation for the Davenport-Heilbronn function $\mathcal{D}(s)$ is given by 
\begin{equation} 
\xi(s)=\xi(1-s), 
\end{equation}
where 
\begin{equation} 
\xi(s) = \left ( \frac{\pi}{5} \right )^{-\frac{s}{2}} \Gamma \left ( \frac{1+s}{2} \right ) \mathcal{D}(s).  
\end{equation} 
It should be noted that contrary to $\zeta(s)$ the function $\mathcal{D}(s)$ does not have an Euler product. If we apply
Euler acceleration of series to $\mathcal{D}(s)$ we can express 
\begin{equation} 
\mathcal{D}(s) = \sum_{n=0}^{\infty} \widetilde{A}_{DH}(n,s), 
\end{equation} 
where 
\begin{equation} 
\widetilde{A}_{DH}(n,s):= \frac{1}{2^{n+2}} \sum_{k=0}^n (-1)^{k+1} \binom{n}{k}  \frac{(1-i \kappa)\chi_{5,2}(k)+(1+i \kappa)\overline{\chi}_{5,2}(k) }{k^s}.
\end{equation} 
In view of the functional equation, we define 
\begin{equation} 
\widetilde{\xi}_N(s)= \sum_{n=0}^N \widetilde{B}_{DH}(n,s),
\end{equation} 
where 
\begin{equation} 
\widetilde{B}_{DH}(n,s):=\frac{1}{2} \left [  \left ( \frac{\pi}{5} \right )^{-\frac{s}{2}} \Gamma \left ( \frac{1+s}{2} \right ) \widetilde{A}_{DH}(n,s) + \left ( \frac{\pi}{5} \right )^{\frac{s-1}{2}} \Gamma \left ( \frac{2-s}{2} \right ) \widetilde{A}_{DH}(n,1-s) \right ] 
\end{equation} 
We mention (without proof) that $\widetilde{X}_N(s)$ attains exponential accuracy of $\xi(s)$ around $N=N(s)=\left [ 2 t \right ]$ in this case. 

Franca and LeClair show in \cite{FL,FL2} that the zeros $\rho^{DH}_{0,n}= \frac{1}{2}+i t^{DH}_{0,n}$ of $\widetilde{\xi}_0$ all lie on the critical line their imaginary part $t^{DH}_{0,n}$ can be expressed in terms of the Lambert function as follows 
\begin{equation} 
t^{DH}_{0,n}:=\frac{2 \pi (n - \frac{5}{8} ) }{W_0 (5 e^{-1}(n-\frac{5}{8} ))}. 
\end{equation}
The following Fig. \ref{fig:f11} shows (a) the consecutive sequences $t^{DH}_{N,44}$ (brown) and $t^{DH}_{N,45}$ (blue) with $t^{DH}_{44}$ (green) and $t^{DH}_{45}$ (red). (b) The consecutive sequences $\widetilde{t}^{DH}_{N,44}$ (brown) and $\widetilde{t}^{DH}_{N,45}$ (blue) with $t^{DH}_{44}$ (green) and $t^{DH}_{45}$ (red) for $N=1,...,200$.
\begin{figure}[ht!]
	\centering
		\includegraphics[scale=0.35]{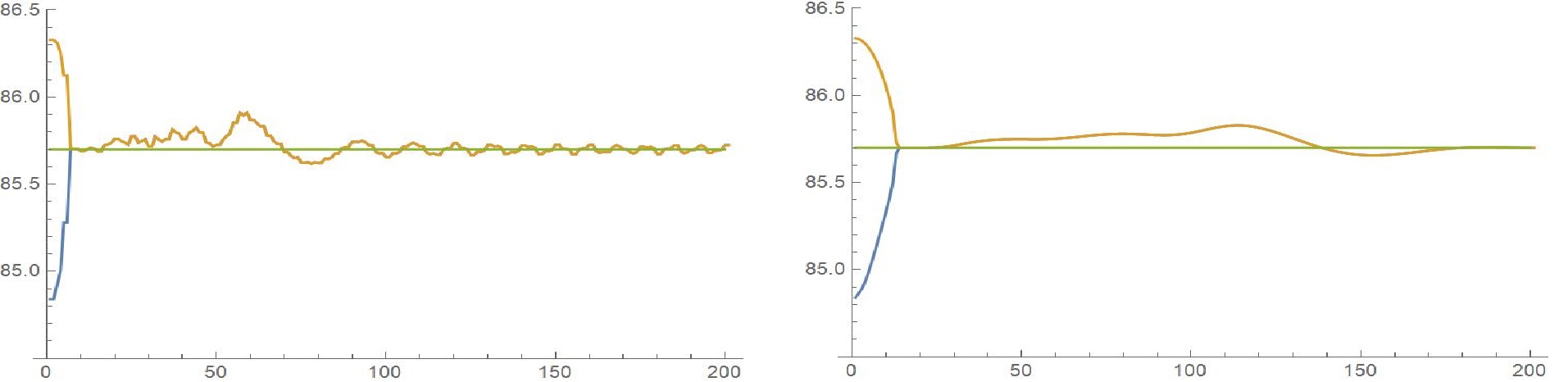} 
	\caption{ (a) The consecutive sequences $t^{DH}_{N,44}$ (brown) and $t^{DH}_{N,45}$ (blue) with $t^{DH}_{44}$ (green) and $t^{DH}_{45}$ (red). (b) The consecutive sequences $\widetilde{t}^{DH}_{N,44}$ (brown) and $\widetilde{t}^{DH}_{N,45}$ (blue) with $t^{DH}_{44}$ (green) and $t^{DH}_{45}$ (red) for $N=1,...,200$.}
\label{fig:f11}
	\end{figure}
	
In contrast to the zeta function, the the collision of the sequences presented in Fig. \ref{fig:f11} is essential and cannot be cancelled by changing the order of summation. Moreover, the collision occurs at $N=12$, that is, already within the initial \emph{chaotic region}. In other words, it seems that in the Davenport-Heilbronn setting the chaotic region does have enough \emph{energy$\setminus$momentum} to collide consecutive zeros. In particular, the zeros of the DH function $\mathcal{D}(s)$ off the critical line seem to present an essentially different behaviour then those of the zeta function $\zeta(s)$.  
\end{rem} 

\begin{rem}[Montgomery's pair correlation conjecture] \label{rem:MPC} For pairs of zeros of zeta one has (assuming RH) the famous pair correlation conjecture 
due to Montgomery, see \cite{M}. Let $\alpha \leq \beta$ and set 
\begin{equation} 
A(T ; \alpha, \beta):= \left \{ (\rho,\rho') \mid  0 < \rho,\rho'<T \textrm{ and } \frac{2 \pi \alpha}{ln(T)} \leq \rho - \rho' \leq \frac{2 \pi \beta}{ln(T)} \right \}. \ 
\end{equation}  
The conjecture states that 
\begin{equation} 
N(T ; \alpha,\beta):= \sum_A 1 \sim \left (\int_{\alpha}^{\beta} \left (1- \frac{sin (\pi u)}{\pi u} \right )du + \delta_0([\alpha,\beta]) \right ) \frac{T}{2 \pi} ln(T) 
\end{equation} 
for $T \rightarrow \infty$. Since the integral is small when $u$ is small, the conjecture is typically intuitively understood as expressing the idea that consecutive zeros "repel each other". Hence, although the term repulsion suggests a dynamic relation, in the context of the Montgomery conjecture the notion of "repulsion" is a statistical one. 

In our setting we also speak about "repulsion" between consecutive zeros of zeta or, more concretely, between the consecutive sequences $t_n^N$ and $t_{n+1}^N$. However, in our setting the notion of repulsion is indeed a dynamic one rather than statistic. Concretely, we say that at stage $N$ the sequence is "repelling" if the distance between the two sequences decreases 
\begin{equation} 
\abs{t_{n+1}^N-t_n^N}<\abs{t_{n+1}^{N+1}-t_n^{N+1}}
\end{equation} 
and "attracting" if the opposite inequality occurs and the zeros get closer to one another. Note that the repulsion or attraction of adding $B_n(s)$ depends on the position of the zeros of $B_n(s)$ (described in Proposition \ref{prop:2.1}) relative to the zeros of the section to which they are added. Our approach is that for any pair of consecutive zeros the overall "attraction" cannot exceed the original distance $\abs{t^0_{n+1}-t^0_n}$. 
\end{rem}
 \section{Summary and Concluding Remarks} 
 \label{s:5} 
 
The Riemann hypothesis is the postulate that all non-trivial zeros of $\zeta(s)$ lie on the critical line. Riemann himself computed the first three zeros of $\zeta(s)$ (as communicated by Siegel \cite{S}). As of today, the Riemann hypothesis has been numerically verified for zeros $\rho$ with $Im(\rho)$ up to around $3 \cdot 10^{12}$, see \cite{PT}. However, since its introduction a plausibility argument for RH, that is a conceptual reason for its validity, aside from numerical verification, has been essentially thought-after\footnote{For instance Edwards writes as follows in his classical book \cite{E}: \emph{"Even today, more than a hundred years later, one cannot really give any solid reasons for saying that the truth of the RH is "probable" etc. Also the verification of the hypothesis for the first three and a half million roots above the real axis perhaps makes it more "probable". However, any real \bf \emph{reason} \rm, any plausibility argument or heuristic basis for the statement, seems entirely lacking" (H. M. Edwards, 1974)}}.

In this work we suggested a new approach for the study of the zeros of the zeta function $\zeta(s)$ by studying the dynamic way in which the zeros of the section $\zeta_N(s)$ change with respect to $N$. The zeros of $\zeta_1(s)$, which are regulated and well-understood, start on the critical line and then move their position as $N$ changes. A pair of zeros can go off the critical line only if a collision between the two zeros occur. We conjectured, based on numerical evidence, that collisions could always be avoided by changing the order of summation of the elements $B_n(s)$ comprising the section $\zeta_{\left [ \frac{t}{2} \right ]}(s)$, in a specific manner to which we refer as a repelling re-arrangement. 

In this sense we suggest that the observed dynamic repulsion phenomena could be viewed as a certain "plausibility argument" for RH. Indeed, the idea suggested in this work, that the zeros begin on the critical line and then develop in a way that dynamically repels them for colliding, obliges them to stay on the critical line, which at the end of the process is essentially the RH. Of course, as explained, if it would be possible to prove that the introduced repelling re-arrangements indeed avoid collisions, in general, for any pair of consecutive zeros, this would imply RH.

\end{document}